\def\smallsetminus{\setminus}
\title{Orthogonal arrays from Hermitian varieties}
\author{A. Aguglia ${}^*$ \and L. Giuzzi
  \thanks{Research supported by  the Italian
    Ministry MIUR, Strutture geometriche, combinatoria e loro
    applicazioni.}}
\date{}
\theoremstyle{plain}
\newtheorem{prop}{Proposition}[section]
\newtheorem{theorem}[prop]{Theorem}
\newtheorem{lemma}[prop]{Lemma}
\theoremstyle{definition}
\newtheorem{remark}[prop]{Remark}
\def\cR{{\mathcal R}}
\def\cC{\mathcal C}
\newcommand{\cD}{\mathcal D}
\def\cB{\mathcal B}
\def\cW{\mathcal W}
\def\cP{\mathcal P}
\def\cS{\mathcal S}
\def\cA{\mathcal A}
\def\cH{\mathcal H}
\def\cQ{\mathcal Q}
\newcommand{\GF}{\mathrm{GF}}
\newcommand{\tr}{\mathrm{tr}}
\begin{document}

\maketitle
\begin{abstract}
A simple orthogonal array  $OA(q^{2n-1},q^{2n-2}, q,2)$  is
constructed by using  the action of a large subgroup of
$PGL(n+1,q^2)$ on a set of non--degenerate Hermitian varieties in
$PG(n,q^2)$.
\end{abstract}

{\bf Keywords}: Orthogonal array; Hermitian variety; collineation.
\section{Introduction}

Let $\cQ=\{0,1,\ldots,q-1\}$ be a set of $q$ symbols and consider a
$(k\times N)$--matrix $A$ with entries in $\cQ$. The matrix $A$ is
an \emph{orthogonal array} with $q$ levels and strength $t$, in
short an $OA(N,k,q,t)$, if any $(t\times N)$--subarray of $A$
contains each $t\times 1$--column  with entries in $\cQ$, exactly
$\mu=N/q^t$ times. The number $\mu$ is called the \emph{index} of
the array $A$. An orthogonal array is \emph{simple} when it does not
contain any repeated column.

Orthogonal arrays were first considered in the early Forties, see
Rao \cite{Rao1, Rao2}, and have been intensively studied ever since,
see \cite{Sloane}. They have  been widely used in statistic, computer
science and cryptography.

There are also remarkable links between these arrays and affine
designs, see \cite{Sh,To}. In particular, an $OA(q\mu_1,k,q,1)$
exists if and only if there is a resolvable $1-(q\mu_1,\mu_1,k)$
design. Similarly, the existence of an $OA(q^2\mu_2,k,q,2)$, is
equivalent to that of  an affine $1-(q^2\mu_2,q\mu_2,k)$  design,
see \cite{Sh}




%
%
A general procedure for constructing an orthogonal array depends on
homogeneous forms $f_1,\ldots,f_k$,  defined over a subset
$\cW\subseteq\GF(q)^{n+1}$. The array
\[
A(f_1,\ldots,f_k;\cW)= \left\{\begin{pmatrix}
    f_1(x)\\
    f_2(x)\\
    \vdots\\
    f_k(x)
\end{pmatrix} : x \in \cW \right\}, \]
with  an arbitrary order of columns, provides an orthogonal array if
the size  of the intersection $V(f_i)\cap V(f_j)\cap\cW$  for
distinct varieties $V(f_i)$ and $V(f_j)$, is independent of the
choice of $i$, $j$. Here $V(f)$ denotes the algebraic variety
associated to $f$. This procedure was applied for linear functions
by Bose \cite{Bo}, and for quadratic functions by Fuji-Hara and
Miyamoto \cite{FuMi1,FuMi2}.

In this paper, we  construct a simple orthogonal array
$\cA_0=OA(q^{2n-1},q^{2n-2}, q,2)$ by using  the above procedure for
 Hermitian forms. To do this we look into the action of a large
subgroup of $PGL(n+1,q^2)$ on a set of non--degenerate Hermitian
varieties in $PG(n,q^2)$. The resulting orthogonal array $\cA_0$ is
closely related to an affine
$2-(q^{(2n-1)},q^{2(n-1)},q^{(2n-3)}+\ldots+q+1)$ design $\cS$, that
for $q\geq 2$, provides a non--classical model of the
$(2n-1)$--dimensional affine space $AG(2n-1,q)$. Precisely, the
points of $\cS$ are labelled by the columns of $\cA_0$, some
parallel classes of $\cS$ correspond to  the rows of $\cA_0$ and
each of the $q$ parallel blocks associated to a  given row of
$\cA_0$ is labelled by one of the $q$ different symbols in that row.

\section{Preliminary results on Hermitian varieties}
\label{cones}

Let $\Sigma=PG(n,q^2)$ be the desarguesian projective space of
dimension $n$ over $\GF(q^2)$ and denote by
$X=(x_1,x_2,\ldots,x_{n+1})$ homogeneous coordinates for its points.
The hyperplane $\Sigma_{\infty}: X_{n+1}=0$ will be taken as the
hyperplane at infinity.

We use $\sigma$ to write the involutory automorphism of $\GF(q^2)$
which leaves all the elements of the subfield $\GF(q)$ invariant. A
Hermitian variety $\cH(n,q^2)$ is the set of all points $X$ of
$\Sigma$  which are self conjugate under a Hermitian polarity $h$.
If $H$ is the Hermitian $(n+1)\times(n+1)$--matrix associated to
$h$, then the Hermitian variety  $\cH(n,q^2)$ has equation
\[X H(X^{\sigma})^T=0.\]
When $A$ is non--singular, the corresponding Hermitian variety is
\emph{non--degenerate}, whereas if $A$ has rank $n$, the related
variety is a \emph{Hermitian cone}. The radical of a Hermitian cone,
that is  the set $\{Y \in \Sigma| \  Y H (X^{\sigma})^T=0 \ \forall
X \in \Sigma\}$, consists of one point, the \emph{vertex} of the
cone.

All non--degenerate Hermitian varieties are projectively equivalent;
a possible canonical equation is
\begin{equation}\label{eqp}
 X_1^{q+1}+\ldots+X_{n-1}^{q+1}+X_n^qX_{n+1}+X_nX_{n+1}^q=0,
\end{equation}
where the polynomial on the left side of \eqref{eqp} is a
\emph{Hermitian form}. All Hermitian cones of $\Sigma$ are also
projectively equivalent.

A non--degenerate Hermitian  variety $\cH(n,q^2)$
 of $\Sigma$ has several
remarkable properties, see \cite{Segre,H1}; here we just recall the
following.
\begin{enumerate}[(1)]
\item The number of points on $\cH(n,q^2)$ is
  \[\mu_n(q)=q^{2n-1}+q(q^{n-\epsilon}+\ldots+q^{2n-4})+
  q^{n+\epsilon-2}+\ldots+q^2+1,\]
  where $\epsilon=0$ or $1$, according as $n$ is
  even or odd.
\item
  A maximal subspace of $\Sigma$ included in $\cH(n,q^2)$
  has dimension
  \[\left\lfloor \frac{n-1}{2}\right\rfloor.\]
  These maximal subspaces are called \emph{generators} of
  $\cH(n,q^2)$.
\item Any line of $\Sigma$ meets $\cH(n,q^2)$ in $1$,  $q+1$  or $q^2+1$
  points. The lines meeting $\cH$ in one point are called
  \emph{tangent lines}.
\item The  polar hyperplane $\pi_P$ with respect to
  $h$ of a point $P$ on $\cH(n,q^2)$ is the locus of the lines through $P$
  either
  contained in $\cH(n,q^2)$ or  tangent to it at $P$.
  This hyperplane $\pi_P$ is also
  called the \emph{tangent hyperplane} at $P$ of $\cH(n,q^2)$. Furthermore,
  \[|\cH(n,q^2) \cap \pi_P |=
  1+q^2\mu_{n-2}(q).
  \]
\item Every hyperplane $\pi$ of $\Sigma$ which is not a tangent
  hyperplane of $\cH(n,q^2)$ meets $\cH(n,q^2)$ in a non--degenerate Hermitian
  variety $\cH(n-1,q^2)$ of $\pi$.
\end{enumerate}

In Section \ref{affdes} we shall make extensive use of
non--degenerate Hermitian varieties, together with Hermitian cones
of vertex the point $P_{\infty}(0,0,\ldots,1,0)$. Let
$AG(n,q^2)=\Sigma\setminus \Sigma_{\infty}$ be the affine space
embedded in $\Sigma$. We may  provide an affine representation for
the Hermitian cones with  vertex at $P_{\infty}$ as follows.

Let $\varepsilon$ be a primitive element of $\GF(q^2)$. Take a point
$(a_1,\ldots,a_{n-1},0)$ on  the affine hyperplane $\Pi: X_{n}=0$ of
$AG(n,q^2)$. We can always write $a_i=a_i^1+\varepsilon a_i^2$ for
any $i=1,\ldots,n-1$.
There is thus a bijective correspondence $\vartheta$ between the
points of $\Pi$ and those of $AG(2n-2,q)$,
\[ \vartheta(a_1,\ldots,a_{n-1},0)=
(a_1^1,a_1^2,\ldots,a_{n-1}^1,a_{n-1}^2).
\]
Pick now a hyperplane $\pi'$ in $AG(2n-2,q)$ and consider its
pre--image  $\pi=\vartheta^{-1}(\pi')$ in $\Pi$. The set of all the
lines $P_{\infty}X$ with $X\in\pi$ is a Hermitian cone of vertex
$P_{\infty}$. The set $\pi$ is a basis of this cone.

Let $T_0=\{t\in\GF(q^2): \tr(t)=0 \}$, where $\tr: x \in\GF(q^2)
\mapsto x^q+x \in\GF(q)$ is the trace function. Then, such an
Hermitian cone  $\cH_{\mathbf{\omega},v}$ is represented by
\begin{equation}\label{eqc}
\omega_1^qX_1-\omega_1X_1+\omega_2^qX_2^q-\omega_2X_2+\ldots+
\omega_{n-1}^qX_{n-1}^q-\omega_{n-1}X_{n-1}=v,
 \end{equation}
where $\omega_i \in\GF(q^2)$,  $v \in T_0$ and there exists at least one
$i\in\{1,\ldots,n-1\}$ such that $\omega_i\neq 0$.

\section{Construction}
\label{oarr}

In this section we provide a family of simple orthogonal arrays
$OA(q^{2n-1},q^{2n-2}, q,2)$, where  $n$ is  a positive integer and
$q$  is any prime power. Several constructions based on finite
fields of orthogonal arrays are known, see for instance \cite{Bo,
FuMi1,FuMi2}. The construction of
 \cite{Bo} is based upon linear
transformations over finite fields. Non--linear functions are used
in \cite{FuMi1,FuMi2}. In  \cite{FuMi2}, the authors dealt with a
subgroup of $PGL(4,q)$, in order to obtain suitable quadratic
functions in $4$ variables; then, the domain $\cW$ of these
functions was appropriately restricted, thus producing an orthogonal
array $OA(q^3,q^2,q,2)$. The construction used in the aforementioned
papers starts from
 $k$ distinct multivariate
functions $f_1,\ldots,f_k$, all
with a common domain $\cW\subseteq\GF(q)^{n+1}$,
which provide an array
\[
A(f_1,\ldots,f_k;\cW)= \left\{\begin{pmatrix}
    f_1(x)\\
    f_2(x)\\
    \vdots\\
    f_k(x)
\end{pmatrix} : x \in \cW \right\}, \]
with  an arbitrary order of columns.
\par
In general, it is possible to generate  functions $f_i$ starting
from homogeneous polynomials in $n+1$ variables and considering the
action of a subgroup of the projective group $PGL(n+1,q)$. Indeed,
any given homogeneous polynomial $f$ is associated to a variety
$V(f)$ in $\Sigma$ of equation
\[f(x_1,\ldots,x_{n+1})=0.\]
The image $V(f)^g$ of $V(f)$ under the action of an element $g\in
PGL(n+1,q)$ is a variety $V(f^g)$ of $\Sigma$, associated to the
polynomial $f^g$.

A necessary condition for $A(f_1,\ldots,f_k; \cW)$ to be an
orthogonal array, when all the $f_i$'s are homogeneous, is that
$|V(f_i)\cap V(f_k)\cap\cW|$ is independent of the choice of $i$,
$j$, whenever $i\neq j$.


Here, we consider homogeneous polynomials which are  Hermitian forms
of $\GF(q^2)[X_1,\ldots,X_n,X_{n+1}]$. Denote by $G$ the subgroup of
$PGL(n+1,q^2)$ consisting of all collineations represented by
\[\alpha (X'_1,\ldots,X'_{n+1})= (X_1,\ldots, X_{n+1})M\]
where $\alpha \in GF(q^2)\setminus\{0\}$, and
\begin{equation} \label{collin}
M=\begin{pmatrix}
    1 &0& \ldots & 0 &j_1& 0\\
    0 & 1&\ldots & 0 & j_2& 0\\
    \vdots&&&&&\vdots\\
    0 & 0&&1 & j_{n-1}&0 \\
    0& 0&\ldots &0 & 1 & 0\\
    i_1 &i_2& \ldots&i_{n-1} & i_n&1 \\
  \end{pmatrix}^{-1},
\end{equation}
with $i_s, \ j_{m}\in\GF(q^2)$. The group $G$ has order
$q^{2(2n-1)}$. It stabilises the hyperplane $\Sigma_\infty$, fixes
the point $P_{\infty}(0,\ldots,0,1,0)$ and acts transitively on
$AG(n,q^2)$.

Let $\cH$ be the non--degenerate Hermitian  variety associated to
the Hermitian form \[F=
X_1^{q+1}+\ldots+X_{n-1}^{q+1}+X_n^qX_{n+1}+X_nX_{n+1}^q.\] The
hyperplane $\Sigma_\infty$ is the tangent hyperplane at $P_\infty$
of $\cH$. The Hermitian form associated to the variety $\cH^g$, as
$g$ varies in $G$, is
\begin{equation}
\label{fgeq}
\begin{split}
F^g=
X_1^{q+1}&+\ldots+X_{n-1}^{q+1}+X_{n}^qX_{n+1}+X_{n}X_{n+1}^q+
X_{n+1}^{q+1}(i_1^{q+1}+\ldots+i_{n-1}^{q+1}+i_n^q+i_n)
\\
&
+\tr\left(X_{n+1}^q(X_1(i_1^q+j_1)+\ldots+X_{n-1}(i_{n-1}^q+j_{n-1}))\right)
\end{split}
\end{equation}
The subgroup $\Psi$ of $G$ preserving $\cH$ consists of all
collineations whose matrices satisfy the condition
\[\left\{\begin{array}{l}
     j_1=-i_1^q \\
     \vdots\\
     j_{n-1}=-i_{n-1}^q\\
     i_1^{q+1}+\ldots+i_{n-1}^{q+1}+i_n^q+i_n=0
   \end{array}\right.. \]
Thus, $\Psi$ contains  $q^{(2n-1)}$ collineations and acts on the
affine points of $\cH$ as a sharply transitive permutation group.
Let $C=\{a_1=0,\ldots,a_q\}$ be a system of representatives for the
cosets  of $T_0$, viewed as an additive subgroup of $\GF(q^2)$.
Furthermore, let
 $\cR$ denote the subset of $G$
 whose
collineations are induced by
\begin{equation}\label{rep}
 M'= \begin{pmatrix}
    1 &0& \ldots & 0 &0& 0\\
    0 & 1&\ldots & 0 & 0& 0\\
    \vdots&&&&&\vdots\\
    0 & 0&&1 &0&0 \\
    0& 0&\ldots &0 & 1 & 0\\
    i_1& i_2& \ldots&i_{n-1} & i_n&1 \\
  \end{pmatrix}^{-1},
\end{equation}
where $i_1,\ldots,i_{n-1} \in GF(q^2)$, and for each tuple
$(i_1,\ldots,i_{n-1})$,  the element $i_n$ is the unique solution in
$C$ of equation
\begin{equation}\label{ara}
i_1^{q+1}+\ldots+i_{n-1}^{q+1}+i_n^q+i_n=0.
\end{equation}
The set  $\cR$ has cardinality $q^{2n-2}$ and can be used to
construct a set of Hermitian form $\{F^g| g\in \cR\}$ whose related
 varieties are pairwise distinct.
%
%
%
%
\begin{theorem}
  \label{teo:oa1}
  For any given prime power $q$, the matrix $\cA=A(F^g,g \in \cR; \cW)$,
  where
  \[\cW=\{(x_1,\ldots,x_{n+1})\in\GF(q^2)^{n+1}:
  x_{n+1}=1\}, \]
  is
  an $OA(q^{2n},q^{2n-2},q,2)$ of index $\mu=q^{2n-2}$.
\end{theorem}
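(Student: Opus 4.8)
The plan is to verify directly, from the explicit description \eqref{fgeq} of $F^g$, that $\cA$ meets the defining property of an $OA(q^{2n},q^{2n-2},q,2)$. First I would dispose of the bookkeeping: $\cW$ has $q^{2n}$ elements and $\cR$ has $q^{2n-2}$ elements, so $\cA$ is a $(q^{2n-2})\times(q^{2n})$ matrix and, if it is an orthogonal array of strength $2$, its index is $q^{2n}/q^{2}=q^{2n-2}=\mu$. Moreover, for every $g\in\cR$ and every $x\in\cW$ the scalar $F^g(x)$ lies in $\GF(q)$: setting $X_{n+1}=1$ in \eqref{fgeq}, each summand becomes either a norm $x_j^{q+1}$, or the trace $x_n^q+x_n=\tr(x_n)$, or a value of $\tr$ (or a $\GF(q)$-combination thereof), all of which belong to $\GF(q)$. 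Thus $\cA$ has the correct format and $q$ levels, and it remains to prove strength $2$: for any two distinct $g,g'\in\cR$ and any $(\lambda,\lambda')\in\GF(q)^2$,
\[ \#\{x\in\cW:\ F^g(x)=\lambda,\ F^{g'}(x)=\lambda'\}=q^{2n-2}. \]
Strength $1$ then follows by summing this identity over $\lambda'$.

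Next I would make \eqref{fgeq} explicit on $\cW$. An element $g\in\cR$ is described by a tuple $(i_1,\dots,i_{n-1})\in\GF(q^2)^{n-1}$, with $i_n\in C$ the solution of \eqref{ara}; since \eqref{ara} kills the coefficient of $X_{n+1}^{q+1}$ in \eqref{fgeq}, restriction to $X_{n+1}=1$ yields the $\GF(q)$-valued function
\[ \varphi_g(x):=F^g(x_1,\dots,x_n,1)=\sum_{j=1}^{n-1}x_j^{q+1}+\tr(x_n)+\tr\Bigl(\sum_{j=1}^{n-1}i_j^{q}x_j\Bigr). \]
Two structural remarks then do all the work. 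First, for fixed $x_1,\dots,x_{n-1}$ the function $\varphi_g$ depends on $x_n$ only through $\tr(x_n)$, and $x_n\mapsto\tr(x_n)$ is a surjective $\GF(q)$-linear map $\GF(q^2)\to\GF(q)$, so each of its fibres has size $q$. Second, in $\varphi_g-\varphi_{g'}$ the norm terms $\sum x_j^{q+1}$ and the term $\tr(x_n)$ cancel, leaving
\[ \varphi_g(x)-\varphi_{g'}(x)=\tr\Bigl(\sum_{j=1}^{n-1}\bigl(i_j^{q}-(i'_j)^{q}\bigr)x_j\Bigr)=:\ell(x_1,\dots,x_{n-1}); \]
because distinct elements of $\cR$ carry distinct tuples ($i_n$ being determined by $i_1,\dots,i_{n-1}$) and $t\mapsto t^q$ is a bijection of $\GF(q^2)$, at least one coefficient $i_j^{q}-(i'_j)^{q}$ is nonzero, so $\ell$ is a nonzero $\GF(q)$-linear functional on $\GF(q^2)^{n-1}\cong\GF(q)^{2(n-1)}$; in particular the rows $\varphi_g$ and $\varphi_{g'}$ of $\cA$ are distinct.

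Finally I would count by partitioning $\cW$ according to the value of $(x_1,\dots,x_{n-1})$. The pair of conditions $\varphi_g(x)=\lambda$, $\varphi_{g'}(x)=\lambda'$ is equivalent to $\ell(x_1,\dots,x_{n-1})=\lambda-\lambda'$ together with $\varphi_g(x)=\lambda$. Since $\ell$ is a nonzero $\GF(q)$-linear functional on a $\GF(q)$-space of dimension $2(n-1)$, there are exactly $q^{2(n-1)}/q=q^{2n-3}$ tuples $(x_1,\dots,x_{n-1})$ with $\ell=\lambda-\lambda'$; and for each such tuple the residual equation $\varphi_g(x)=\lambda$ reads $\tr(x_n)=\lambda-\sum_j x_j^{q+1}-\tr\bigl(\sum_j i_j^{q}x_j\bigr)$, a fixed element of $\GF(q)$, which by the first structural remark has exactly $q$ solutions $x_n\in\GF(q^2)$. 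Multiplying gives $q^{2n-3}\cdot q=q^{2n-2}$ columns, as wanted. I expect the only delicate point to be the algebra of the middle step — reducing \eqref{fgeq} on $\cW$, invoking \eqref{ara} to drop the $X_{n+1}^{q+1}$ term, and noticing that the norm contributions cancel in $\varphi_g-\varphi_{g'}$; once that is in place the combinatorics is immediate from the surjectivity of $\tr$ and of nonzero $\GF(q)$-linear functionals. (Throughout, as the assertion of strength $2$ presupposes, $n\ge 2$.)
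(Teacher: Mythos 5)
Your proposal is correct and follows essentially the same route as the paper: subtract the two row equations so that the norm terms $\sum x_j^{q+1}$ and $\tr(x_n)$ cancel, count the $q^{2n-3}$ tuples $(x_1,\dots,x_{n-1})$ satisfying the resulting nonzero trace (i.e.\ $\GF(q)$--linear) condition, and then the $q$ admissible values of $x_n$ coming from the surjectivity of the trace, giving $q^{2n-2}$ columns for each pair of symbols. The only difference is that you handle an arbitrary pair $g\neq g'$ in $\cR$ directly, whereas the paper reduces to pairs involving the identity row ("it is sufficient to show..."), so your version is marginally more complete but not a different method.
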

\begin{proof}
It is sufficient to
  show that the number of solutions in $\cW$ to the system
  \begin{equation}
    \left\{\begin{array}{l}
        \label{orto1}
        F(X_1,X_2,\ldots,X_n,X_{n+1})=\alpha \\
        F^g(X_1,X_2,\ldots,X_n,X_{n+1})=\beta
      \end{array}\right.
  \end{equation}
is $q^{2n-2}$ for any $\alpha$, $\beta \in\GF(q)$, $g\in
  \cR\setminus\{id\}$.
  By definition of  $\cW$, this
system  is equivalent to
  \begin{equation}
    \left\{\begin{array}{l}
        \label{orto2}
        X_1^{q+1}+\ldots+X_{n-1}^{q+1}+X_n^q+X_n=\alpha  \\
        X_1^{q+1}+\ldots+X_{n-1}^{q+1}+X_{n}^q+X_{n}+
        \tr\left(X_1i_1^q+\ldots+X_{n-1}i_{n-1}^q\right)=\beta
      \end{array}\right.
\end{equation}
Subtracting the first equation from the second we get
\begin{equation}\label{trc:2}
 \tr(X_1i_1^q+\ldots+X_{n-1}i_{n-1}^q)=\gamma,
\end{equation}
where $\gamma=\beta-\alpha$. Since  $g$ in not the identity then,
$(i_1^q, \ldots,i_{n-1}^q) \neq (0,\ldots,0)$, and hence Equation
\eqref{trc:2} is equivalent to the union of $q$ linear equations
over $\GF(q^2)$ in $X_1, \ldots, X_{n-1}$. Thus, there are
$q^{2n-3}$ tuples $(X_1, \ldots, X_{n-1})$ satisfying \eqref{trc:2}.
For each such a tuple, \eqref{orto2} has $q$ solutions in $X_n$ that
provide a coset of $T_0$ in $\GF(q^2)$. Therefore, the system
\eqref{orto1} has $q^{2n-2}$ solutions in $\cW$ and the result
follows.
%
\end{proof}
The array $\cA$ of Theorem \ref{teo:oa1} is not simple since
\begin{equation}\label{col} F^{g}(x_1,\ldots,x_n,1)=
F^{g}(x_1,\ldots, x_n+r,1)
\end{equation}
 for any $g \in \cR$, and
$r\in T_0$.

We now investigate how to extract a subarray $\cA_0$ of $\cA$ which
is simple.
 We shall need a preliminary lemma.
 \begin{lemma}
   \label{lem:tr}
   Let $x\in\GF(q^2)$ and suppose
   $\tr(\alpha x)=0$ for any $\alpha\in\GF(q^2)$.
   Then, $x=0$.
\end{lemma}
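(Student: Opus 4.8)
The plan is to argue by contraposition: assuming $x\neq 0$, I will produce an element $\alpha\in\GF(q^2)$ with $\tr(\alpha x)\neq 0$. The key observation is that multiplication by a nonzero element of $\GF(q^2)$ is a bijection of $\GF(q^2)$: if $x\neq 0$, then as $\alpha$ runs over $\GF(q^2)$, the product $\alpha x$ also runs over all of $\GF(q^2)$. Hence the hypothesis $\tr(\alpha x)=0$ for all $\alpha$ would be equivalent to $\tr(t)=0$ for all $t\in\GF(q^2)$, i.e. to $\tr$ being identically zero on $\GF(q^2)$.

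It then remains to rule out that $\tr$ vanishes identically. I would do this by a counting argument: the trace map is given by the polynomial $t\mapsto t^q+t$, which has degree $q$ and therefore admits at most $q$ roots in $\GF(q^2)$; since $|\GF(q^2)|=q^2>q$, not every element of $\GF(q^2)$ can be a root, so $\tr$ is not the zero map. (Equivalently, one may invoke the fact already used in the construction of $\cR$, namely that $T_0=\ker\tr$ has exactly $q$ elements, being a subgroup of index $q$ in the additive group of $\GF(q^2)$, and $q<q^2$.) Combining the two steps yields the contradiction, so $x=0$.

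There is essentially no obstacle here; the statement is just the non-degeneracy of the $\GF(q)$-bilinear form $(\alpha,x)\mapsto\tr(\alpha x)$ on $\GF(q^2)$, and the only thing to be careful about is not to use $\tr(1)\neq 0$ (which fails in characteristic $2$) but instead the degree/counting argument, which is uniform in $q$.
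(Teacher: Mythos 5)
Your proof is correct, but it takes a different route from the paper. The paper's own argument is a one-line appeal to a standard representation theorem (\cite[Theorem 2.24]{LR}): every $\GF(q)$-linear map $\Xi:\GF(q^2)\to\GF(q)$ has the form $\Xi(t)=\tr(\alpha t)$ for a unique $\alpha\in\GF(q^2)$, so the hypothesis forces $x$ to lie in the kernel of \emph{every} linear functional, whence $x=0$. You instead argue by contraposition with two elementary observations: multiplication by a nonzero $x$ is a bijection of $\GF(q^2)$, so the hypothesis would make $\tr$ identically zero; and $\tr(t)=t^q+t$ is a nonzero polynomial of degree $q$, hence has at most $q<q^2$ roots (equivalently, $T_0=\ker\tr$ has only $q$ elements). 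Your version is self-contained and avoids citing the representation theorem, at the cost of being slightly longer; the paper's version gets non-degeneracy of the trace form for free from the quoted result. Your remark about not relying on $\tr(1)\neq 0$ is well taken, since $\tr(1)=2=0$ in characteristic $2$; the counting argument is indeed uniform in $q$.
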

\begin{proof}
Consider $\GF(q^2)$ as a $2$--dimensional vector space over
$\GF(q)$. By \cite[Theorem 2.24]{LR}, for any  linear mapping
$\Xi:\GF(q^2)\rightarrow\GF(q)$, there exists exactly one
$\alpha\in\GF(q^2)$ such that $\Xi(x)=\tr(\alpha x)$. In particular,
if $\tr(\alpha x)=0$ for any $\alpha\in\GF(q^2)$, then $x$, is in
the kernel of all linear mappings $\Xi$. It follows that $x=0$.
%
%
%
\end{proof}
\begin{theorem}
\label{teo:fh0}
 For any prime power $q$, the matrix
 $\cA_0=A(F^g,g\in \cR,\cW_0)$, where
 \[ \cW_0=\{(x_1,\ldots,x_{n+1})\in \cW: x_n\in C\} \]
 is a simple $OA(q^{2n-1},q^{2n-2},q,2)$
 of index $\mu=q^{2n-3}$.
\end{theorem}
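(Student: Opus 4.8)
The plan is to first record the dimensions, then verify the orthogonality condition, and finally show that no column is repeated. Since $T_0=\ker\tr$ has order $q$, the transversal $C$ meets each coset of $T_0$ in exactly one element, so $\tr$ restricts to a bijection $C\to\GF(q)$; in particular $|\cW_0|=(q^2)^{n-1}\cdot q=q^{2n-1}$, while $|\cR|=q^{2n-2}$ as already observed. Hence an orthogonal array with these dimensions and strength $2$ necessarily has index $q^{2n-1}/q^2=q^{2n-3}$, and it only remains to check the orthogonality and simplicity properties.

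For orthogonality I would fix two distinct rows $g,g'\in\cR$, with parameter tuples $(i_1,\ldots,i_{n-1})$ and $(i'_1,\ldots,i'_{n-1})$ (the last entries being determined by \eqref{ara}), together with $\alpha,\beta\in\GF(q)$. On $\cW$, formula \eqref{fgeq} combined with \eqref{ara} (which cancels the constant term) gives $F^g(x_1,\ldots,x_n,1)=\sum_{k=1}^{n-1}x_k^{q+1}+x_n^q+x_n+\tr\bigl(\sum_{k=1}^{n-1}x_ki_k^q\bigr)$, and likewise for $g'$. Subtracting the equations $F^g=\alpha$ and $F^{g'}=\beta$ leaves the single condition $\tr\bigl(\sum_{k=1}^{n-1}x_k(i_k-i'_k)^q\bigr)=\alpha-\beta$. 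As $g\neq g'$ forces $i_j\neq i'_j$ for some $j$, Lemma \ref{lem:tr} shows that $(x_1,\ldots,x_{n-1})\mapsto\tr\bigl(\sum_k x_k(i_k-i'_k)^q\bigr)$ is a non-zero $\GF(q)$-linear form on the $2(n-1)$-dimensional $\GF(q)$-space $\GF(q^2)^{n-1}$, so exactly $q^{2n-3}$ tuples $(x_1,\ldots,x_{n-1})$ satisfy that condition. For each of them $F^g=\alpha$ reads $x_n^q+x_n=c$ for some $c\in\GF(q)$ depending only on $x_1,\ldots,x_{n-1}$ and $\alpha$, and the bijectivity of $\tr|_C$ gives a unique $x_n\in C$ with $\tr(x_n)=c$; the remaining equation $F^{g'}=\beta$ then holds automatically. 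Hence exactly $q^{2n-3}$ points of $\cW_0$ solve the system, for every $\alpha,\beta\in\GF(q)$ and every ordered pair of distinct rows, which is the required orthogonal array property.

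For simplicity I would suppose that $x=(x_1,\ldots,x_n,1)$ and $x'=(x'_1,\ldots,x'_n,1)$ in $\cW_0$ give equal columns, i.e.\ $F^g(x)=F^g(x')$ for every $g\in\cR$. Taking $g$ to be the identity yields $\sum_{k=1}^{n-1}x_k^{q+1}+\tr(x_n)=\sum_{k=1}^{n-1}(x'_k)^{q+1}+\tr(x'_n)$; subtracting this from the corresponding equality for an arbitrary $g\in\cR$ leaves $\tr\bigl(\sum_{k=1}^{n-1}(x_k-x'_k)i_k^q\bigr)=0$ for every tuple $(i_1,\ldots,i_{n-1})\in\GF(q^2)^{n-1}$. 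Specialising to tuples supported on a single index and applying Lemma \ref{lem:tr} forces $x_k=x'_k$ for $k=1,\ldots,n-1$; the identity-row equality then collapses to $\tr(x_n)=\tr(x'_n)$, and since $x_n,x'_n\in C$, the bijectivity of $\tr|_C$ gives $x_n=x'_n$. Thus $x=x'$ and $\cA_0$ has no repeated column.

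The one genuinely delicate point is the role of the last coordinate: restricting to $x_n\in C$ turns $\tr(x_n)$ into a faithful parameter, so the equation left over after the trace condition has a single solution in $C$ instead of a whole coset of $T_0$ — precisely the coset responsible for the repetitions \eqref{col}. This is what simultaneously makes $\cA_0$ simple and lowers the index from $q^{2n-2}$ (the index of $\cA$) down to $q^{2n-3}$; the remaining steps are the same linear-algebra bookkeeping as in the proof of Theorem \ref{teo:oa1}, together with Lemma \ref{lem:tr}.
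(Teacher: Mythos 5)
Your proof is correct, but it does not follow the paper's route in one of its two halves. The simplicity argument is essentially the paper's own: equality of columns is tested against all rows, the identity row is used to eliminate the $x_t^{q+1}$ terms, the arbitrariness of the coefficients $i_t^q$ together with Lemma \ref{lem:tr} forces $x_t=x'_t$ for $t\le n-1$, and then $\tr(x_n)=\tr(x'_n)$ with $x_n,x'_n\in C$ forces $x_n=x'_n$ (the paper phrases this as: two columns of $\cA$ coincide iff their indices differ by $(0,\ldots,0,r,0)$ with $r\in T_0$). Where you diverge is the orthogonality and index computation: the paper does not recount anything over $\cW_0$; it observes that the columns of $\cA$ split into $q^{2n-1}$ classes of $q$ identical columns, that $\cA_0$ retains exactly one column per class, and hence inherits the $OA$ property from Theorem \ref{teo:oa1} with every frequency divided by $q$, giving index $q^{2n-3}$. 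You instead verify the strength-$2$ condition directly on $\cW_0$ for an arbitrary pair of distinct rows $g,g'\in\cR$, using that distinct elements of $\cR$ have distinct tuples $(i_1,\ldots,i_{n-1})$, that a nonzero $\GF(q)$-linear form on $\GF(q^2)^{n-1}$ has fibres of size $q^{2n-3}$, and that $\tr$ restricts to a bijection $C\to\GF(q)$ so the last coordinate is determined uniquely. Your version is slightly longer but more self-contained: it does not rely on the deletion argument, and it explicitly covers all pairs of rows, whereas the proof of Theorem \ref{teo:oa1} in the paper only writes out the system for the pair consisting of the identity and a nontrivial $g$ (the general pair being implicitly reduced to the same computation, exactly the subtraction you perform). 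The paper's approach, in exchange, makes transparent the structural relation between $\cA$ and $\cA_0$ (each column class of size $q$ collapses to one column), which is also what the subsequent remark about the design $\cS$ exploits.
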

\begin{proof}
  We first show that $\cA_0$ does not contain any
  repeated column. Let $\cA$ be the array introduced
  in Theorem \ref{teo:oa1}, and
  index its columns by the
  corresponding elements in $\cW$.
  Observe that the
  column
  $(x_1,\ldots,x_n,1)$
  is the same as
  $(y_1,\ldots,y_n,1)$
  in $A$
  if, and only if,
  \[ F^g(x_1,\ldots,x_n,1)=F^g(y_1,\ldots,y_n,1), \]
  for any $g\in \cR$.
  We thus obtain a system of $q^{2n-2}$
  equations in the $2n$ indeterminates $x_1,\ldots
  x_n,y_1,\ldots,y_n$. Each equation is of the form
  \begin{equation}
    \label{eq:scol}
    \tr(x_n-y_n)=
    \sum_{t=1}^{n-1}\left(y_t^{q+1}-x_t^{q+1}+\tr(a_t(y_t-x_t))\right),
  \end{equation}
  where the elements $a_t=i_t^q$ vary in $\GF(q^2)$ in
  all possible ways.
  The left hand side of the equations in \eqref{eq:scol}
  does not depend on the elements $a_t$; in particular,
  for $a_1=a_2=\ldots=a_t=0$ we have,
  \[ \tr(x_n-y_n)=\sum_{t=1}^{n-1}(y_t^{q+1}-x_t^{q+1}); \]
  hence,
  \[
 \sum_{t=1}^{n-1}(y_t^{q+1}-x_t^{q+1})=
 \sum_{t=1}^{n-1}\left(y_t^{q+1}-x_t^{q+1}+\tr(a_t(y_t-x_t))\right)
   \]
  Thus,
  $\sum_{t=1}^{n-1}\tr(a_t(y_t-x_t))=0$.
  By the arbitrariness of the coefficients $a_t\in\GF(q^2)$,
  we obtain that for any $t=1,\ldots n-1$, and any
  $\alpha\in\GF(q^2)$,
  \[ \tr(\alpha(y_t-x_t))=0. \]
  Lemma \ref{lem:tr} now yields
  $x_t=y_t$ for any $t=1,\ldots, n-1$ and
  we also get from
  \eqref{eq:scol}
  \[ \tr(x_n-y_n)=0. \]
  Thus, $x_n$ and $y_n$ are in the same coset of
  $T_0$. It follows that two columns of $\cA$
  are the same if and only if the difference of their
  indexes in $\cW$ is
  a vector of the form $(0,0,0,\ldots,0,r,0)$
  with $r\in T_0$. By construction, there are
  no two distinct vectors in $\cW_0$ whose difference
  is of the required form; thus, $\cA_0$ does
  not contain repeated columns.

  The preceding argument shows that the
  columns of $\cA$ are partitioned
  into $q^{2n-1}$ classes, each consisting of $q$ repeated columns.
  Since $\cA_0$ is obtained from $\cA$ by deletion
  of $q-1$ columns in each class, it follows that $\cA_0$ is an
  $OA(q^{2n-1},q^{2n-2},q,2)$ of index $q^{2n-3}$.
  \end{proof}

\section{A non--classical model of $AG(2n-1,q)$}
 \label{affdes}

We keep the notation introduced in the previous sections. We are
going to  construct an affine
$2-\left(q^{2n-1},q^{2n-2},q^{(2n-3)}+\ldots+q+1\right)$ design
$\cS$ that, as we will see, is related to the array $A_0$ defined in
Theorem \ref{teo:fh0}. Our construction is a generalisation of
\cite{A}.


Let again consider the subgroup $G$ of $PGL(n+1,q^2)$ whose
collineations are induced by matrices  \eqref{collin}.
The group $G$
acts on the set
of all Hermitian cones  of the form \eqref{eqc} as
a permutation group.
In this action,
${G}$ has $q^{(2n-3)}+\ldots+1$ orbits, each of size $q$.
In particular the $q^{(2n-3)}+\ldots+1$ Hermitian cones
$\cH_{\mathbf{\omega},0}$
 of affine equation
\begin{equation} \label{eqc2}
\omega_1^qX_1-\omega_1X_1+\omega_2^qX_2^q-\omega_2X_2+\ldots+
\omega_{n-1}^qX_{n-1}^q-\omega_{n-1}X_{n-1}=0,
 \end{equation}
with $(\omega_1,\ldots,\omega_{n-1}) \in
GF(q^2)^{n-1}\setminus\{(0,\ldots,0)\}$, constitute a system of
representatives for these orbits.

The stabiliser in $G$ of the origin $O(0,\ldots,0,1)$  fixes the
line $OP_{\infty}$ point--wise, while is transitive on the points of
each other line passing through $P_{\infty}$. Furthermore, the
centre of $G$ comprises all collineations induced by
\begin{equation}\label{centre} \begin{pmatrix}
    1 &0& \ldots & 0 &0& 0\\
    0 & 1&\ldots & 0 & 0& 0\\
    \vdots&&&&&\vdots\\
    0 & 0&&1 &0&0 \\
    0& 0&\ldots &0 & 1 & 0\\
    0&0& \ldots&0 & i_n&1 \\
  \end{pmatrix}^{-1}, \end{equation}
with $i_n \in GF(q^2)$. The subset of \eqref{centre} with $i_n\in
T_0$ induces a normal subgroup $N$ of $G$ acting semiregularly on
the affine points of $AG(n,q^2)$ and preserving each line parallel
to the $X_n$-axis. Furthermore, $N$ is contained in $\Psi$ and also
preserves every affine Hermitian cone $\cH_{\mathbf{\omega},v}$.

We may now define an incidence structure $\cS=(\cP,\cB,I)$ as
follows. The set $\cP$ consists of all the point--orbits of
$AG(n,q^2)$ under the action of $N$. Write $N(x_1,\ldots,x_n)$ for
the  orbit  of the point $(x_1,\ldots,x_n)$ in $AG(n,q^2)$under the
action of $N$.

The elements of $\cB$ are the images of the Hermitian variety $\cH$
of affine equation
\begin{equation}\label{affeq1}X_1^{q+1}+\ldots+X_{n-1}^{q+1}+X_n^q+X_n=0,
\end{equation} together with the
images of the Hermitian cones \eqref{eqc2} under the action of  $G$.
If a block $B\in\cB$ arises from \eqref{affeq1}, then it will be
called \emph{Hermitian--type}, whereas if $B$ arises from
\eqref{eqc2}, it will be \emph{cone--type}. Incidence is given by
inclusion.

\begin{theorem}\label{mainthm}
The aforementioned incidence structure $\cS$ is an affine
\[ 2-(q^{(2n-1)},q^{2(n-1)},q^{(2n-3)}+\ldots+q+1) \]
design, isomorphic, for $q>2$,
 to the point--hyperplane design  of the affine space $AG(2n-1,q)$.
\end{theorem}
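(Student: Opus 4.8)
The plan is to identify $\cS$ outright with the point--hyperplane design of $AG(2n-1,q)$ by an explicit isomorphism, which at one stroke yields the $2$-design property with the stated parameters, the resolvability, and the affine structure. The first step is to coordinatise $\cP$. The normal subgroup $N$ of \eqref{centre} is generated by the maps $X_n\mapsto X_n+t$, $t\in T_0$, while $\tr$ vanishes precisely on $T_0$ and is onto $\GF(q)$; hence
\[
N(x_1,\dots,x_n)\ \longmapsto\ \bigl(x_1^1,x_1^2,\dots,x_{n-1}^1,x_{n-1}^2,\tr(x_n)\bigr),\qquad x_t=x_t^1+\varepsilon x_t^2 ,
\]
is a bijection of $\cP$ onto $AG(2n-1,q)$. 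I shall write $x$ for the first $2n-2$ coordinates $x_1^1,x_1^2,\dots,x_{n-1}^1,x_{n-1}^2$ and $z$ for the last, and put $Q=X_1^{q+1}+\dots+X_{n-1}^{q+1}$, viewed as a non--degenerate quadratic form over $\GF(q)$ in the coordinates $x$.

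Next I would read off the blocks in these coordinates. A cone--type block is a $G$--image of some $\cH_{\mathbf{\omega},0}$, hence a cone \eqref{eqc}; its equation involves only $x$, so it becomes an affine hyperplane of $AG(2n-1,q)$ containing the $z$--direction (a ``vertical'' hyperplane), and counting the $G$--orbits on the cones as in Section~\ref{affdes} --- using that $\mathbf{\omega}\mapsto$ (the associated $\GF(q)$--linear form) is a bijection --- shows the cone--type blocks to be exactly the $(q^{2n-1}-q)/(q-1)$ vertical hyperplanes, each occurring once. For a Hermitian--type block $\cH^g$, restricting \eqref{fgeq} to $\cW$ gives the affine equation
\[
Q+\tr\Bigl(\textstyle\sum_{t=1}^{n-1}x_t(i_t^q+j_t)\Bigr)+\tr(x_n)=c_g ,\qquad c_g=-\Bigl(\textstyle\sum_{t=1}^{n-1}i_t^{q+1}+\tr(i_n)\Bigr) ,
\]
so in $\cP$ the block $\cH^g$ is the graph $z=c_g-Q-\ell_g(x)$, where $\ell_g(x)=\tr(\sum_t x_t(i_t^q+j_t))$ is $\GF(q)$--linear in $x$. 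By Lemma~\ref{lem:tr} (equivalently \cite[Theorem~2.24]{LR}) the assignment $(i_t^q+j_t)_t\mapsto\ell_g$ runs over \emph{all} $\GF(q)$--linear forms in $x$; for each fixed such form the constant $c_g$ can still be driven to any prescribed value of $\GF(q)$ through the choice of $i_n$; and distinct pairs $(\ell,c)$ plainly give distinct subsets of $\cP$. Hence the Hermitian--type blocks are precisely the $q^{2n-1}$ graphs $\{z=c-Q-\ell(x)\}$, with $\ell$ a $\GF(q)$--linear form and $c\in\GF(q)$.

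The decisive step is the ``straightening'' bijection $\phi\colon (x,z)\mapsto (x,w)$ with $w=z+Q$, which is well defined on $\cP$ because $Q$ depends only on $x_1,\dots,x_{n-1}$. It fixes every vertical hyperplane setwise and sends the graph $\{z=c-Q-\ell(x)\}$ onto the non--vertical hyperplane $\{w=c-\ell(x)\}$; thus $\phi$ carries $\cB$ bijectively onto the set of \emph{all} affine hyperplanes of $AG(2n-1,q)$ --- the vertical ones being the cone--type blocks and the non--vertical ones the Hermitian--type blocks. Therefore $\phi$ is an isomorphism of $\cS$ onto the point--hyperplane design of $AG(2n-1,q)$; in particular $\cS$ is a $2$-$(q^{2n-1},q^{2n-2},q^{2n-3}+\dots+q+1)$ design, it is resolvable (the parallel classes being the $G$--orbits of the cones together with the pencils $\{z=c-Q-\ell(x):c\in\GF(q)\}$ with $\ell$ fixed), and it is affine (two non--parallel blocks meet in $(q^{2n-2})^2/q^{2n-1}=q^{2n-3}$ points).

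The one genuinely technical point is the second paragraph: showing that the $G$--orbit of $\cH$, reduced modulo $N$, is \emph{exactly} the family $\{z=c-Q-\ell(x)\}$ with $(\ell,c)$ entirely unconstrained --- the surjectivity half, handled via the explicit shape of \eqref{fgeq} and Lemma~\ref{lem:tr} --- together with the companion exhaustiveness statement for the cone--type blocks. Once this is in place, the straightening map and the transfer of the design properties are routine, and the hypothesis $q>2$ is invoked only in this final identification. (Alternatively one could first check the affine--resolvable axioms and parameters directly and then appeal to a characterisation of $AG(2n-1,q)$, but the explicit $\phi$ seems cleaner.)
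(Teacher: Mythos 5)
Your proof is correct, but it follows a genuinely different route from the paper. The paper never exhibits an isomorphism directly: it first verifies by hand that $\cS$ is a $2$-design (counting Hermitian--type and cone--type blocks through two points, split into the cases where the points do or do not lie on a line through $P_\infty$), then checks the two affine--design axioms by computing all pairwise block intersections, and finally computes the lines of $\cS$ and invokes the characterisation of affine spaces among affine designs with at least three points per line (\cite[Theorem 12, p.~74]{DE}), which is where the hypothesis $q>2$ enters. You instead coordinatise $\cP$ as $AG(2n-1,q)$ via $(x,\tr(x_n))$, observe from \eqref{fgeq} that the Hermitian--type blocks are exactly the graphs $z=c-Q(x)-\ell(x)$ with $(\ell,c)$ unrestricted (the surjectivity being exactly Lemma~\ref{lem:tr} plus the free choices of $j_t$ and $i_n$), identify the cone--type blocks with the vertical hyperplanes (here you lean on the paper's Section~4 statement about the $G$--orbits on the cones \eqref{eqc}, which is legitimate since it is part of the setup, though your ``$\omega\mapsto$ linear form is a bijection'' should be read up to $\GF(q)$--proportionality), and then straighten with $(x,z)\mapsto(x,z+Q(x))$. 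This buys a lot: the $2$-design parameters, resolvability and affineness all come for free from the point--hyperplane design of $AG(2n-1,q)$, no intersection case analysis is needed, and no appeal to Dembowski's deep characterisation is required --- indeed your argument nowhere uses $q>2$, so your closing remark that this hypothesis is ``invoked in the final identification'' is unnecessary (you actually prove the isomorphism for all $q$, which is stronger than, and consistent with, the statement). The paper's approach, by contrast, is intrinsic and also exhibits the combinatorial structure (block intersection sizes, the lines of $\cS$) explicitly, which is reused in the closing Remark relating $\cS$ to $\cA_0$.
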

\begin{proof}
By construction, $\cS$ has $q^{2n-1}$ points and
$q^{(2n-1)}+q^{2(n-1)}\ldots+q$  blocks, each block consisting of
$q^{2(n-1)}$ points.

We first prove that  the number of blocks through any two given
points is $q^{(2n-3)}+\ldots+q+1$. Since $\cS$ has a
point--transitive automorphism group, we may assume, without loss of
generality, one of these points to be $O=N(0,\ldots,0)$. Let
$A=N(x_1,x_2,\ldots,x_{n})$ be the other point.
We distinguish two cases,
according as the points lie on the same line
through $P_{\infty}$ or not.

We begin by considering the case
 $(0,0,\ldots,0)\neq(x_1,x_2,\ldots,x_{n-1})$.
The line $\ell$
represented by $X_1=x_1, \ldots, X_{n-1}=x_{n-1}$, is a  secant to
the Hermitian variety $\cH$. Since the stabiliser of the origin is
transitive on the points of $\ell$, we may assume that
$A\subseteq\cH$; in particular,
$(x_1,x_2,\ldots,x_{n})\in\cH$ and
\begin{equation}\label{eqherm}
x_1^{q+1}+\ldots+x_{n-1}^{q+1}+x_{n}^q+x_{n}=0.
\end{equation}
Observe that this condition is satisfied by every possible
representative of $A$. Another Hermitian type block arising from the
variety $\cH^g $ associated to the form \eqref{fgeq}, contains the
points $O$ and $A$ if and only if
\begin{equation}\label{eq1her1}
i_1^{q+1}+\ldots+i_{n-1}^{q+1}+i_n^q+i_n=0
\end{equation}
and
\begin{equation}\label{eq2her1}
\begin{array}{c}x_1^{q+1}+\ldots+x_{n-1}^{q+1}+x_{n}^q+x_{n}+
  x_1^q(i_1+j_1^q)+\ldots+x_{n-1}^q(i_{n-1}+j_{n-1}^q)+\\
+x_1(i_1^q+j_1)+\ldots+x_{n-1}(i_{n-1}^q+j_{n-1})+i_1^{q+1}+
\ldots+i_{n-1}^{q+1}+i_n^q+i_n=0.
\end{array}
\end{equation}
 Given \eqref{eqherm} and \eqref{eq1her1}, Equation \eqref{eq2her1}
 becomes
 \begin{equation}\label{trc}
 \tr(x_1(i_1^q+j_1)+\ldots+x_{n-1}(i_{n-1}^q+j_{n-1}))=0
\end{equation}
Condition \eqref{eq1her1}
shows that there are $q^{2n-1}$ possible choices for
the tuples $\mathbf{i}=(i_1, \ldots, i_n)$; for any such a tuple,
because of \eqref{trc},
we get $q^{2n-3}$ values for $\mathbf{j}=(j_1,\ldots,j_{n-1})$.
Therefore, the total number of
Hermitian--type blocks through the points $O$ and $A$ is
exactly
\[\frac{q^{4(n-1)}}{q^{2n-1}}=q^{2n-3}.\]
On the other hand, cone--type blocks containing $O$ and $A$ are just
cones with basis a  hyperplane of $AG(2n-2,q)$, through the line
joining   the affine points $(0,\ldots,0)$ and
$\theta(x_1,\ldots,x_{n-1},0)$; hence, there are precisely
$q^{2n-4}+\ldots+q+1$ of them.

We now deal with the case $(x_1,x_2,\ldots,x_{n-1})= (0,0,\ldots,
0)$. A Hermitian--type block through  $(0,\ldots,0)$ meets the
$X_n$-axis at  points of the form $(0,\ldots,0,r)$ with $r\in T_0$.
Since $x_{n} \notin T_0$,  no Hermitian--type block may contain both
$O$ and $A$. On the other hand, there are $q^{2n-3}+\ldots+q+1$
cone-type blocks through the two given points that is, all cones
with basis a  hyperplane in $AG(2n-2,q)$ containing the origin of
the reference system in $AG(2n-2,q)$. It follows that $\cS$ is a
$2-(q^{(2n-1)}, q^{2(n-1)}, q^{(2n-3)}+\ldots+q+1)$ design.

Now we recall that two blocks of a design may be defined  parallel
if they are either coincident or disjoint. In order to show that
$\cS$ is indeed an affine design we need to check the following two
properties, see \cite[Section 2.2, page 72]{DE}:
\begin{enumerate}[(a)]
\item\label{prop:a} any two distinct blocks either are disjoint or have
$q^{2n-3}$ points in common;
\item\label{prop:b} given a point $N(x_1,\ldots,x_n) \in \cP$ and a block
$B\in\cB$ such that $N(x_1,\ldots,x_n)\notin B$, there exists a
unique block $B'\in \cB$ satisfying both $N(x_1,\ldots,x_n)\in B'$
and $B \cap B'=\emptyset$.
\end{enumerate}
We start by showing that  (\ref{prop:a})  holds for any two distinct
Hermitian--type blocks. As before, we may suppose one of them to be
$\cH$ and denote by $\cH^g$ the other one, associated to the form
\eqref{fgeq}. We need to solve the system  of equations given by
\eqref{eqherm} and \eqref{eq2her1}. Subtracting \eqref{eqherm} from
\eqref{eq2her1},
\begin{equation}
  \label{trc:1}
  \tr(x_1(i_1^q+j_1)+\ldots+x_{n-1}(i_{n-1}^q+j_{n-1}))=\gamma,
\end{equation}
where $\gamma=-(i_1^{q+1}+\ldots+i_{n-1}^{q+1}+i_n^q+i_n)$.

Suppose that  $(i_1^q+j_1, \ldots,i_{n-1}^q+j_{n-1}) \neq (0,\ldots,0)$.
Arguing as in the proof of
Theorem \ref{teo:oa1}, we see that there are $q^{2n-3}$ tuples
$(x_1, \ldots, x_{n-1})$ satisfying \eqref{trc:1} and,  for each
such a tuple, \eqref{eqherm} has $q$ solutions in $x_1$. Thus, the
system given by \eqref{eqherm} and \eqref{eq2her1} has $q^{2n-2}$
solutions; taking into account the definition of a point of $\cS$,
it follows that the number of the common points of the two blocks
under consideration is indeed $q^{2n-3}$.

In the case $(i_1^q+j_1, \ldots,i_{n-1}^q+j_{n-1})= (0,\ldots,0)$,
either $\gamma \neq 0$ and the two blocks are disjoint, or
$\gamma=0$ and the two blocks are the same.

We now move to consider the case wherein both blocks are cone--type.
The  bases of these blocks are either disjoint or
share $q^{2(n-2)}$ affine
points; in the former case, the blocks are disjoint;
in the latter,
they have $q^{2(n-2)}$
lines in common. Since each line of $AG(n,q^2)$
consists of $q$
points of $\cS$, the intersection of the two blocks has size
$q^{2n-3}$.

We finally study the intersection of two blocks of different type.
We may assume again the Hermitian--type block to be $\cH$. Let then
$\cC$ be  cone--type.  Each generator of  $\cC$ meets the Hermitian
variety $\cH$ in $q$ points which form an orbit of $N$. Therefore,
the number of common points between the two blocks is, as before,
$q^{2n-3}$; this completes the proof of (\ref{prop:a}).

We are going to show that property (\ref{prop:b}) is also satisfied.
By construction, any cone--type block meets every Hermitian--type
block. Assume first $B$ to be the Hermitian variety $\cH$ and
$P=N(x_1,x_2,\ldots,x_n)\not\subseteq\cH$. Since we are looking for
a block $B'$  through $P$, disjoint from $\cH$, also $B'$ must be
Hermitian--type. Let $\beta$ be the collineation induced by
\[
\begin{pmatrix}
    1 &0& \ldots & 0 &0& 0\\
0 & 1&\ldots & 0 & 0& 0\\
\vdots&&&&&\vdots\\
    0 & 0&&1 & 0&0 \\
     0& 0&\ldots &0 & 1 & 0\\
    0 &0& \ldots& 0& i_n&1 \\
      \end{pmatrix}^{-1}, \]
with $i_n^q+i_n+x_1^{q+1}+ \ldots +x_{n-1}^{q+1}+x_n^q+x_n=0$. Then,
the image $B'$ of $\cH$ under $\beta$ is disjoint from $\cH$ and
contains the set $P$. To prove the uniqueness of the block
satisfying condition (\ref{prop:b}), assume that there is another block
$\widetilde{B}$, which is the image of $\cH$ under the collineation
$\omega$ induced by
\[\begin{pmatrix}
    1 &0& \ldots & 0 &b_1& 0\\
0 & 1&\ldots & 0 & b_2& 0\\
\vdots&&&&&\vdots\\
    0 & 0&&1 & b_{n-1}&0 \\
     0& 0&\ldots &0 & 1 & 0\\
    a_1 &a_2& \ldots&a_{n-1} & a_n&1 \\
      \end{pmatrix}^{-1}, \]
and
such that $\widetilde{B} \cap \cH= \emptyset$ and $P\subseteq\widetilde{B}$.
 As $\widetilde{B}$ and $\cH$ are disjoint,
 the  system given by \eqref{eqherm} and
 \begin{equation}\label{eq3her1}
   \begin{array}{c}x_1^{q+1}+\ldots+x_{n-1}^{q+1}+x_{n}^q+x_{n}+x_1^q(a_1+b_1^q)+\ldots+x_{n-1}^q(a_{n-1}+b_{n-1}^q)+\\
     +x_1(a_1^q+b_1)+\ldots+x_{n-1}(a_{n-1}^q+b_{n-1})
     +a_1^{q+1}+\ldots+a_{n-1}^{q+1}+a_n^q+a_n=0.
\end{array}
\end{equation}
must have no solution. Arguing as in the proof of (\ref{prop:a}),
we see that this
implies that $(a_1^q+b_1, \ldots,a_{n-1}^q+b_{n-1})= (0,\ldots,0)$.
On the other hand, $P\in \widetilde{B}\cap B'$ yields
$i_n^q+i_n+a_1^{q+1}+\ldots+a_{n-1}^{q+1}+a_n^q+a_n=0$, that is $
\omega^{-1}\beta$ is in the stabiliser $\Psi$ of $\cH$ in $G$;
hence, $B'=\widetilde{B}$.

Now, assume $B$ to be a cone-type block. Denote by  $\pi$  its basis
and let $P'=(x_1^1,x_1^2,\ldots,x_{n-1}^1,x_{n-1}^2)$ be the image
$\vartheta(x_1,\ldots,x_{n-1},0)$ on the affine space $AG(2n-2,q)$
identified, via $\vartheta$, with the affine hyperplane $X_n=0$. In
$AG(2n-2,q)$ there is a unique hyperplane $\pi'$ passing trough the
point $P'$ and disjoint from $\pi$. This hyperplane  $\pi'$ uniquely
determines the block $B'$ with property (\ref{prop:b}).

In order to conclude the proof of the current theorem we shall
require a deep characterisation of the high--dimensional affine
space, namely that an affine design $\cS$ such that $q>2$,  is an
affine space if and only if every line consists of exactly $q$
points, see \cite[Theorem 12, p. 74]{DE}.

Recall that the line of a design $\cD$ through two given points
$L,M$ is defined as the set of all points of $\cD$  incident to
every block containing both $L$ and $M$.  Thus, choose two distinct
points in $\cS$. As before, we may assume that one of them is
$O=N(0, \ldots, 0)$ and let $A=N(x_1, \ldots, x_n)$ be the other
one.

Suppose first that $A$  lies on the $X_n$-axis. In this case, as we
have seen before, there are exactly $q^{2n-3}+\ldots+q+1$ blocks
incident to both $O$ and $A$, each of them  cone--type. Their
intersection consists of $q$ points of $\cS$ on the $X_n$-axis.

We now examine the case where $A$ is not on the $X_n$-axis. As
before, we may assume that $A\subseteq\cH$, hence \eqref{eqherm}
holds.
 Exactly
 $q^{2n-3}+\ldots+q+1$ blocks  are incident to both $O$ and $A$:
  $q^{2n-2}$ are  Hermitian--type, the remaining
 $q^{2n-4}+\ldots+q+1$ being cone--type.
 Hermitian--type blocks
 passing through $O$ and $A$ are represented by
\begin{equation}\label{ints}
\begin{array}{c}X_1^{q+1}+\ldots+X_{n-1}^{q+1}+X_{n}^q+X_{n}+
  X_1^q(i_1+j_1^q)+\ldots+\\X_{n-1}^q(i_{n-1}+j_{n-1}^q)
+X_1(i_1^q+j_1)+\ldots+X_{n-1}(i_{n-1}^q+j_{n-1})=0,
\end{array}
\end{equation}
with \eqref{trc} satisfied. Set $x_s=x_s^1+\varepsilon x_s^2$ for
any $s=1,\ldots,n$, with $x_s^1,x_s^2\in\GF(q)$.
 The cone--type
blocks  incident to both $O$ and $A$  are  exactly those with basis
a hyperplane of $AG(2n-2,q)$ containing the line through the points
$(0,\ldots,0)$ and $(x_1^1,x_1^2,\ldots,x_{n-1}^1,x_{n-1}^2)$.
Hence, these blocks share $q$ generators, say $r_t$, with affine
equations of the form
\[
r_t\left\{\begin{array}{l}
     X_1=tx_1 \\
     \vdots\\
     X_{n-1}=tx_{n-1}
   \end{array}\right.
 \]
as $t$ ranges over $\GF(q)$.  Each generator $r_t$ meets the
intersection of the Hermitian--type blocks through $O$ and $A$ at
those  points $(tx_1,tx_2,\ldots,tx_{n-1}, \overline{x}_n) $
satisfying each of the \eqref{ints}, that is
\begin{equation}\label{eq4}
  \begin{array}{c}
    t^2x_1^{q+1}+\ldots+t^2x_{n-1}^{q+1}+\overline{x}_n^q+\overline{x}_n+tx_1^q(i_1+j_1^q)+
    \ldots+tx_{n-1}^q(i_{n-1}+j_{n-1}^q)+\\
    +tx_1(i_1^q+j_1)+\ldots+tx_{n-1}(i_{n-1}^q+j_{n-1})=0.
\end{array}\end{equation}
Given \eqref{eqherm}, \eqref{trc}, Equations  \eqref{eq4} become
\begin{equation}\label{eq5}
\overline{x}_n^q+\overline{x}_n-t^2(x_n^q+x_n)=0.
\end{equation}
Since $t^2(x_n^q+x_n)\in\GF(q)$, \eqref{eq5} has $q$ solutions, all
of the form $\{\overline{x}_{n}+r | r \in T_0 \}$. The point-set
$\{(tx_1,tx_2,\ldots,tx_{n-1}, \overline{x}_n+r)| r\in T_0\} $
coincides with the point $N(tx_1,tx_2, \ldots,tx_{n-1},
\overline{x}_{n}) \in \cP$ and  as $t$ varies in $GF(q)$, we get
that the intersection of all blocks containing $O$ and $A$ consists,
also in this case, of $q$ points of $\cS$.
\end{proof}
\begin{remark} The array  $A_0$ defined in Theorem \ref{teo:fh0} is closely related to
the affine design $\cS=(\cP,\cB,I)$. Precisely, $\cW_0$ is a set of
representatives for $\cP$. The rows of $\cA_0$ are generated by the
forms  $F^g$ for $g$ varying  in $\cR$, whose associated Hermitian
varieties provide a set of representatives for the $q^{2n-2}$
parallel classes of Hermitian--type blocks in $\cB$.
\end{remark}

\bigskip
\begin{minipage}{6.8cm}
\begin{obeylines}
{\sc Angela Aguglia}
Dipartimento di Matematica
Politecnico di Bari
Via G. Amendola 126/B
70126 Bari
Italy
{\tt a.aguglia@poliba.it}
\end{obeylines}
\end{minipage}
\qquad
\begin{minipage}{6.8cm}
\begin{obeylines}
{\sc Luca Giuzzi}
Dipartimento di Matematica
Politecnico di Bari
Via G. Amendola 126/B
70126 Bari
Italy
{\tt l.giuzzi@poliba.it}
\end{obeylines}
\end{minipage}


\begin{thebibliography}{99}
\bibitem{A} Aguglia, A. \emph{Designs arising from Buekenhout-Metz unitals}
  J. Combin. Des.  {\bf 11}
(2003),  79--88.



\bibitem{BJL}Beth, T. Jungnickel, D., Lenz, H. `` Design
Theory '', Vol. I, II, Cambridge University Press, Cambridge,
(1999).

\bibitem{Bo} Bose, R. C., {\em Mathematical theory of the
symmetrical factorial design}, Sankhya {\bf 8}, (1947), 107--166.



\bibitem{DE} Dembowski, P., {\em Finite Geometries}, Springer--Verlag,
Berlin, Heidelberg, New York, 1968.



\bibitem{FuMi1} Fuji-Hara, R. and Miyamoto, N.,
  {\em  Balanced arrays from quadratic functions},
  J. Statist. Plann. Inference {\bf 84}  (2000), 285--293.

\bibitem{FuMi2} Fuji-Hara and R., Miyamoto, N.,
  \emph{A construction of combinatorial arrays from non-linear functions}
 Util. Math.  {\bf 52}  (1997), 183--192.

\bibitem{H1} Hirschfeld, J.P.W. ``Projective Geometries Over Finite
  Fields,'' Oxford University Press, Oxford, 1998.

\bibitem{LR} Lidl, R. and Niederreiter, H., ``Finite fields'',
  Addison--Wesley, 1983.




\bibitem{Rao1} Rao, C. R., {\em On Hypercubes of strength $d$ and a
system of confounding in factorial experiments} Bull. Cal. Math.
Soc. {\bf38} (1946)  67--68.
\bibitem{Rao2} Rao, C. R., {\em Factorial experiments derivable from
combinatorial arrangements of arrays}, J. Roy. Stat. Soc. (Suppl.)
{\bf 9}, (1947), 128-139.


\bibitem{Segre} Segre, B., {\em Forme e geometrie hermitiane, con particolare riguardo al caso
finito}, Ann. Mat. Pura Appl. {\bf 70} (1965)
1--201.


\bibitem{Sh}
 Shrikhande, S.S. and  Bhagwandas,D.,\emph{On embedding of orthogonal
arrays of strength two},  Combinatorial Mathematica and its
Applications, University of North Carolina Press (1969), $256-273$.


\bibitem{Sloane}  Hedayat, A. S.,  Sloane, N. J. A. and
Stufken, J.
  ``Orthogonal arrays: theory and applications'',
  Springer Verlag (1999).

\bibitem{To} Tonchev, V.D., \emph{Affine designs and linear orthogonal
    arrays}, Discrete Mathematics {\bf 294} (2005), 219--222.


\end{thebibliography}
\end{document}